\newtheorem{theorem}{Theorem}[section]
\theoremstyle{definition}
\newtheorem{proposition}[theorem]{Proposition}
\newtheorem{corollary}[theorem]{Corollary}
\theoremstyle{remark}
\newtheorem{remark}{Remark}[section]
\numberwithin{equation}{section}
\numberwithin{table}{section}
\begin{document}

\title[Coincidence Wecken property for nilmanifolds]{Coincidence Wecken property for nilmanifolds}

\author{Daciberg Gon\c calves}
\address{Departamento de Matem\'atica - IME - Universidade de S\~ao Paulo,\\
Rua do Mat\~ao, 1010 \\
 CEP 05508-090 -
S\~ao Paulo - SP - Brazil}
\email{dlgoncal@ime.usp.br}

\author{Peter Wong}
\address{Department of Mathematics, Bates College, Lewiston,
ME 04240, U.S.A.}
\email{pwong@bates.edu}

\thanks{The first author was supported in part by  Projeto Tematico  Topologia Algebrica Geometrica e Differencial  2008/57607-6.}

\begin{abstract}
Let $f,g:X\to Y$ be maps from a compact infra-nilmanifold $X$ to a compact nilmanifold $Y$ with $\dim X\ge \dim Y$. In this note, we show that a certain Wecken type property holds, i.e., if the Nielsen number $N(f,g)$ vanishes then $f$ and $g$ are deformable to be coincidence free. We also show that if $X$ is a connected finite complex $X$ and the Reidemeister coincidence number $R(f,g)=\infty$ then $f\sim f'$ so that $C(f',g)=\{x\in X \mid f'(x)=g(x)\}$ is empty.
\end{abstract}

\date{\today}
\keywords{Nielsen coincidence theory, nilmanifolds}
\subjclass[2010]{Primary: 55M20; Secondary: 22E25}

\maketitle

\newcommand{\af}{\alpha}
\newcommand{\et}{\eta}
\newcommand{\ga}{\gamma}
\newcommand{\ta}{\tau}
\newcommand{\ph}{\varphi}
\newcommand{\bt}{\beta}
\newcommand{\lb}{\lambda}
\newcommand{\wh}{\widehat}
\newcommand{\sg}{\sigma}
\newcommand{\om}{\omega}
\newcommand{\cH}{\mathcal H}
\newcommand{\cF}{\mathcal F}
\newcommand{\N}{\mathcal N}
\newcommand{\R}{\mathcal R}
\newcommand{\Ga}{\Gamma}
\newcommand{\cc}{\mathcal C}
\newcommand{\bea} {\begin{eqnarray*}}
\newcommand{\beq} {\begin{equation}}
\newcommand{\bey} {\begin{eqnarray}}
\newcommand{\eea} {\end{eqnarray*}}
\newcommand{\eeq} {\end{equation}}
\newcommand{\eey} {\end{eqnarray}}
\newcommand{\ovl}{\overline}
\newcommand{\vv}{\vspace{4mm}}
\newcommand{\lra}{\longrightarrow}


\bibliographystyle{amsplain}

\section{Introduction}

Let $f,g:X\to Y$ be maps between topological spaces. The study of the coincidence set $C(f,g)=\{x\in X\mid f(x)=g(x)\}$ has long been a classical problem. Indeed in the 1920s, S. Lefschetz extended his celebrated fixed point theorem to coincidences. More precisely, he considered maps between closed connected orientable manifolds of the same dimension. A homological trace $L(f,g)$ is defined so that $L(f,g)\ne 0$ implies that $C(f,g)\ne \emptyset$. The converse of this result does not hold in general. In the fixed point case, F. Wecken showed that the Nielsen number $N(f)=0$ implies that $f$ is deformable to be fixed point free when $X=Y$ is a compact manifold of dimension $\dim X\ge 3$. In other words, the Nielsen number $N(f)$ is a complete invariant for deforming selfmaps to be fixed point free.

In the same setting as that of Lefschetz, H. Schirmer successfully in 1955 generalized the Nielsen number to coincidences. Moreover, when $\dim X=\dim Y\ge 3$, she showed that $N(f,g)=0 \Rightarrow$ $f\sim f', g\sim g'$ such that $C(f',g')=\emptyset$. A more difficult situation is when $\dim X\ge \dim Y$. In recent years, progress has been made in this positive codimensional coincidence problem. While there are new approaches and new Nielsen type invariants, we do not know, to the best of our knowledge, if our results regarding positive codimensional coincidences (e.g. \cite{GW1,GW2}) have been obtained using other approaches.

When $g=\overline c$ is the constant map at $c\in Y$, the coincidence problem is equivalent to the root problem (since $C(f,g)=f^{-1}(c)$). A very general Wecken type theorem has been established in \cite{GW1}, namely, $N(f;c)=0 \Rightarrow f\sim f'$ such that $f'^{-1}(c)=\emptyset$. Here $N(f;c)$ is the {\it geometric} Nielsen number studied by R. Brooks \cite{B2}. In \cite{GW2}, we showed that if $f,g:N_1\to N_2$ are maps between two compact nilmanifolds with $\dim N_1\ge \dim N_2$, then $N(f,g)>0 \Rightarrow N(f,g)=R(f,g)$ where $R(f,g)$ is the coincidence Reidemeister number (the number of equivalence classes under $u \sim g_{\sharp} (x)uf_{\sharp}(x)^{-1}$ for $x\in \pi_1(N_1), u\in \pi_1(N_2)$) defined at the fundamental group level. Furthermore, if $N(f,g)>0$ then $o_n(f,g)\ne 0$ where $n=\dim N_2$ and $o_n(f,g)$ is the primary obstruction to deforming $f$ and $g$ to be coincidence free on the $n$-th skeleton of $N_1$. See \cite{GJW} for a more detailed treatment of the use of primary obstruction in the coincidence problem.

The purpose of this note is to show that $o_n(f,g)$ is the {\it only} obstruction (that is, there are no higher obstructions), i.e., $o_n(f,g)=0 \Rightarrow$ $f$ and $g$ are deformable to be coincidence free.  Equivalently, for maps between compact nilmanifolds, the Nielsen number $N(f,g)$ is a complete invariant. In fact, we are able to show that if $X$ is a path connected topological space with finitely generated fundamental group and $Y$ is a compact nilmanifold then for any two maps $f,g:X\to Y$, $R(f,g)=\infty \Rightarrow f\sim f', g\sim g'$ with $C(f',g')=\emptyset$. Finally, using a result of Brooks \cite{B1}, one of the two homotopies can be made constant.

\section{Nielsen Coincidence Theory}

For the coincidence problem where the codimension may be positive, we will make use of the more general ({\it geometric}) notion of the Nielsen coincidence number due to Brooks \cite{B2}. Given two maps $f,g: M\to N$, two coincidences $x_1,x_2\in C(f,g)$ are
{\it equivalent} if there is a path $ \alpha:[0,1]\to M$ with
$\alpha(0)=x_1, \alpha(1)=x_2$ such that $f\circ \alpha$ is homotopic 
to $g\circ \alpha$ ($f\circ \alpha \sim g\circ \alpha$) relative to the
endpoints. The equivalence classes are called coincidence classes. If $\{f_{t}\},\{g_{t}\}$ are homotopies of $f,g$ respectively, then $x\in C(f,g)$ and $y\in C(f_{1},g_{1})$ 
are $\{f_{t}\},\{g_{t}\}$-{\it related} if there exists a path 
$C:[0,1]\to M$ with $C(0)=x,C(1)=y$ such that 
$[\{f_{t}\circ C(t)\}]=[\{g_{t}\circ C(t)\}]$ where $[f\circ C]$ denotes 
the fixed-endpoint homotopy class of the image of $C$ under $f$. It 
follows that if a coincidence in a class $\gamma$ of $f,g$ is 
$\{f_{t}\},\{g_{t}\}$-related to a coincidence in a coincidence 
class $\beta$ of $f_{1},g_{1}$, then every coincidence in $\gamma$ is $\{f_{t}\},
\{g_{t}\}$-related to every coincidence in $\beta$. In this case, the 
class $\gamma$ is said to be $\{f_{t}\},\{g_{t}\}$-related to the class 
$\beta$. A coincidence class $\gamma$ of $f,g$ is {\it essential} if for any 
homotopies $\{f_{t}\}$ and $\{g_{t}\}$ of $f$ and $g$, there is a 
coincidence class of $f_{1},g_{1}$ to which it is related.
Finally, the Nielsen coincidence number of $f$ and $g$, denoted by $N(f,g)$, is defined to be the number of essential coincidence classes. This Nielsen number reduces to the classical one when the domain and the target have the same dimension.

As an easy consequence of the main theorem of \cite{GW1}, we obtain the following

\begin{theorem}\label{roots}
Let $f,g:X \to M$ be maps from a compact topological space $X$ to a compact connected topological group $M$. If $N(f,g)=0$ then $f$ and $g$ are deformable to be coincidence free.
\end{theorem}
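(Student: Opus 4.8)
The key observation is that for maps $f,g:X\to M$ into a topological group $M$, the coincidence problem reduces to a root problem via the standard trick of forming the pointwise "quotient" $h:X\to M$, $h(x)=g(x)\cdot f(x)^{-1}$ (using the group multiplication and inversion of $M$). Then $x\in C(f,g)$ if and only if $h(x)=e$, the identity element; that is, $C(f,g)=h^{-1}(e)$, so coincidences of the pair $(f,g)$ become roots of $h$ at the point $c=e$. The plan is to transport the hypothesis $N(f,g)=0$ across this correspondence, apply the Wecken-type root theorem of \cite{GW1} to conclude $h\sim h'$ with $h'^{-1}(e)=\emptyset$, and then translate the homotopy $h_t$ back into homotopies of $f$ and $g$.

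First I would set up the correspondence carefully: define $h=\mu\circ(g,\iota\circ f)$ where $\mu:M\times M\to M$ is multiplication and $\iota:M\to M$ is inversion, so $h(x)=g(x)f(x)^{-1}$. A homotopy $\{h_t\}$ from $h=h_0$ to $h_1$ induces a homotopy of the pair: keeping $f$ fixed and setting $g_t(x)=h_t(x)\cdot f(x)$ gives $g_0=h_0\cdot f = g$ and a new pair $(f,g_1)$ with $C(f,g_1)=h_1^{-1}(e)$. (Alternatively one can split the deformation between $f$ and $g$, which is what is needed for the "one homotopy constant" refinement elsewhere, but here a single-sided deformation suffices.) Second, I would check that the Nielsen coincidence classes of $(f,g)$ correspond bijectively, and compatibly with essentiality, to the root classes of $h$ at $e$ in the sense of Brooks: two coincidences $x_1,x_2$ are equivalent (joined by $\alpha$ with $f\alpha\sim g\alpha$ rel endpoints) exactly when $h\alpha$ is nullhomotopic rel endpoints as a loop at $e$, and the $\{f_t\},\{g_t\}$-relatedness of classes matches the corresponding relatedness of root classes under $\{h_t\}$. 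Hence $N(f,g)=0$ is equivalent to the geometric root Nielsen number $N(h;e)=0$ in the sense of \cite{B2}.

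Third, I would invoke the main theorem of \cite{GW1}: since $X$ is a compact space and $N(h;e)=0$, the map $h$ is deformable, via a homotopy $\{h_t\}$, to a map $h_1$ with $h_1^{-1}(e)=\emptyset$. Finally, pushing this back through the correspondence of the first step yields $g\sim g_1$ (with $f$ unchanged, or with the deformation suitably distributed between $f$ and $g$) such that $C(f,g_1)=\emptyset$.

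The main obstacle I anticipate is the second step: verifying that the identification $C(f,g)=h^{-1}(e)$ is not merely a set-level bijection but respects the full geometric Nielsen-theoretic structure — the equivalence relation on coincidence/root points, and crucially the notion of essentiality, which is defined by quantifying over \emph{all} homotopies of the pair versus all homotopies of $h$. One must argue that an arbitrary homotopy of $h$ arises (up to the correspondence) from a homotopy of the pair and conversely, so that the two essentiality conditions genuinely match; the group structure on $M$ is exactly what makes this translation of homotopies go through in both directions, but the bookkeeping with basepoints and fixed-endpoint homotopy classes needs care. Everything else — the compactness hypotheses, the application of \cite{GW1}, and the elementary fact that $\mu,\iota$ are continuous — is routine.
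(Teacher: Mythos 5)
Your proposal is correct and follows essentially the same route as the paper: reduce coincidences of $(f,g)$ to roots of the pointwise quotient map at the identity $e$, identify $N(f,g)=0$ with the vanishing of the geometric root Nielsen number, and invoke the main theorem of \cite{GW1}. The paper simply declares this equivalence ``straightforward,'' whereas you spell out the translation of homotopies and Nielsen classes; the extra detail is welcome but the argument is the same.
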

\begin{proof} It is straightforward to show that the coincidence problem is equivalent to the {\it root} problem concerning the preimage $\varphi^{-1}(e)$ where $e\in M$ is the group unity element and $\varphi(x)=[f(x)]^{-1}\cdot g(x)$ where $\cdot$ denotes the group multiplication in $M$. Then $N(f,g)=0$ iff $N(\varphi; \bar e)=0$. The latter implies that $\varphi$ can be made root free by the main theorem of \cite{GW1}.
\end{proof}

\begin{remark} Theorem \ref{roots} also holds if $M=S^7$, the $7$-sphere, which is not a topological group. The validity of the result is due to the fact that $S^7$ has a multiplication with unique inverse so that the coincidence problem can be transformed into a root problem.
\end{remark}

In extending the remain result of \cite{GW2}, we now prove the main result of this paper.

\begin{theorem}\label{nil-nil}
Let $f,g:N_1\to N_2$ be maps between two compact nilmanifolds with $\dim N_1\ge \dim N_2$. Then $N(f,g)=0$ if and only if $g\sim g'$ with $C(f,g')=\emptyset$.
\end{theorem}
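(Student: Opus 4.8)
The plan is to prove the two implications separately; the ``if'' direction is immediate, and the ``only if'' direction I would deduce from the general Wecken-type theorem announced in the Introduction together with the relation, valid for maps between compact nilmanifolds, between the vanishing of $N(f,g)$ and the infinitude of $R(f,g)$. For the ``if'' direction: if $g\sim g'$ with $C(f,g')=\emptyset$, then $f$ and $g'$ have no coincidence classes at all, so $N(f,g')=0$, and since the Nielsen coincidence number is invariant under homotopy of either argument, $N(f,g)=N(f,g')=0$.

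For the ``only if'' direction, assume $N(f,g)=0$. I would first reduce to the case $R(f,g)=\infty$: by the results of \cite{GW2} recalled in the Introduction, for maps between compact nilmanifolds with $\dim N_1\ge\dim N_2$ a finite coincidence Reidemeister number forces the primary obstruction $o_n(f,g)$ (with $n=\dim N_2$) to be non-zero, hence $N(f,g)>0$; equivalently $N(f,g)=0$ implies $R(f,g)=\infty$. Since $N_1$ is a compact nilmanifold, it is in particular a connected finite complex with finitely generated fundamental group, so the general theorem stated in the Introduction---for maps from such a space into a compact nilmanifold, $R(f,g)=\infty$ implies deformability to be coincidence free---produces homotopies $f\sim f'$, $g\sim g'$ with $C(f',g')=\emptyset$. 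Finally, a theorem of Brooks \cite{B1} permits one of the two homotopies to be chosen constant; keeping $f$ fixed yields $g\sim g'$ with $C(f,g')=\emptyset$, which is the assertion.

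The straightforward points are the ``if'' direction and the reduction $N(f,g)=0\Rightarrow R(f,g)=\infty$, the latter being essentially contained in \cite{GW2} (when $R(f,g)<\infty$ the linearized coincidence data force $o_n(f,g)\neq 0$). The core of the matter---and what I expect to be the main obstacle---is the general theorem that $R(f,g)=\infty$ forces a coincidence-free deformation into a compact nilmanifold target, i.e.\ that $o_n(f,g)$ is the \emph{only} obstruction: one must show that once the primary obstruction vanishes, all the higher obstructions to a coincidence-free deformation vanish as well. I would expect this to rest on the aspherical structure of the nilmanifold target together with the root-theoretic results of \cite{GW1}, rather than on the group structure exploited in Theorem~\ref{roots}, which is not available since a compact nilmanifold need not be a topological group.
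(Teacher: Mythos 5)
Your ``if'' direction and the reduction $N(f,g)=0\Rightarrow R(f,g)=\infty$ via \cite{GW2} are correct and agree with the paper. The difficulty is that your ``only if'' direction then rests entirely on the general statement ``$R(f,g)=\infty$ implies $f$ and $g$ are deformable to be coincidence free'' for maps into a compact nilmanifold. In this paper that statement is Theorem \ref{main}, and its proof proceeds by factoring $f,g$ through a compact nilmanifold $\bar N$ and invoking Theorem \ref{equivalent-conditions}, which is itself just a summary of the proof of Theorem \ref{nil-nil}. So, as written, your argument is circular: the ``core of the matter'' you correctly identify is precisely the content of the theorem you are asked to prove, and no independent proof of it is supplied. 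The gesture toward asphericity and the root-theoretic results of \cite{GW1} does not close this gap: the mechanism of Theorem \ref{roots} requires a group structure on the target, which a general compact nilmanifold lacks, and you do not say how \cite{GW1} would be brought to bear without it.

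What is actually needed, and what the paper does, is an induction on $\dim N_2$ using the presentation of the nilmanifold as an iterated principal $S^1$-bundle $S^1\hookrightarrow N_2\stackrel{p}{\to}\bar N_2$. One arranges $C(p\circ f,p\circ g)$ to be a submanifold containing $C(f,g)$, and on it defines a deviation map $d$ into the fibre $S^1$ by $\hat g(x)=d(x)\cdot \hat f(x)$, using the $S^1$-action on the total space. If $d$ is not null homotopic, a computation of Reidemeister classes in the central extension $1\to\pi_1(S^1)\to\pi_1(N_2)\to\pi_1(\bar N_2)\to 1$ (combined with the inductive hypothesis downstairs) shows $R(f,g)$ would be finite, contradicting $R(f,g)=\infty$; if $d$ is null homotopic, one extends it over $N_1$ and translates $g$ by $\epsilon\in S^1\setminus\{1\}$ times this extension to produce $g'\sim g$ with $C(f,g')=\emptyset$. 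Note also that the theorem asserts only $g$ need be moved; your route deforms both maps and then invokes Brooks \cite{B1} to freeze one homotopy, which is a legitimate repair but an extra external ingredient that the paper's direct construction of $g'$ avoids.
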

\begin{proof}  By the definition of $N(f,g)$, $C(f,g_1)\ne \emptyset$ for any $g_1\sim g$ if $N(f,g)>0$.

For the converse, suppose $N(f,g)=0$. It follows from \cite[Theorem 4.2]{GW2} that $R(f,g)=\infty$. Since $N_2$ is a nilmanifold, there is a principal $S^1$-bundle $S^1 \hookrightarrow N_2 \stackrel{p}  \to \bar N_2$.
Without loss of generality, we may assume that
$C(p\circ f, p\circ g)$ is a submanifold of dimension $>\dim N_1-\dim N_2$. Note that $C (f,g)\subset C(p\circ f, p\circ g)$. Denote by $\hat f$, $\hat g$ the restrictions of $f,g $ to $C(p\circ f, p\circ g)$.
Since the circle $S^1$ is a group, we have $\hat g(x)=d(x)\cdot \hat f(x)$ for $d: C(p\circ f, p\circ g)\to S^1$ and $x\in C(p\circ f, p\circ g)$. Here we call $d$ the {\it deviation} map. We have two possibilities: $d$ is null homotopic or $d$ is NOT null homotopic.

If $d$ is null homotopic then it is not difficult to show that the
pair  $(f,g)$ can be deformed to be coincidence free, as we will show at the end of the proof. So let us suppose that $d$ is not null homotopic. Next, we will show that under this assumption, $R(f,g)$ must be finite. If $N_2$ is 2-dimensional then it is a torus and the theorem holds. We induct on $\dim N_2$. If $R(p\circ f, p\circ g)$ is infinite then by induction we are done. Suppose $R(\bar f,\bar g)$ is finite where $\bar f=p\circ f, \bar g=p\circ g$.

Next, we show that the number of Reidemeister classes of $f,g$ which project to the Reidemeister class  $[\bar 1]$ (where $\bar 1\in \pi_1(\bar N_2))$ is finite. Let $x\in \pi_1(N_2)$ such that $p_{\sharp}(x)\in [\bar 1]$. Then there exists $u\in \pi_1(N_1)$
such that $p_{\sharp}(x)=\bar g_{\sharp}(u)\bar 1\bar f_{\sharp}(u)^{-1}$ which implies that $p_{\sharp}(g_{\sharp}(u)^{-1}xf_{\sharp}(u))=1$, i.e., the element   $g_{\sharp}(u)^{-1}xf_{\sharp}(u)$ is in the same Reidemeister class of $x$ and it belongs to the fundamental group of the fibre. For $v\in \pi_1(S^1)$, the elements in the Reidemeister class of $v$ are of the form  $g_{\sharp}(z)vf_{\sharp}(z)^{-1}=vg_{\sharp}(z)f_{\sharp}(z)^{-1}$ since $\pi_1(S^1)$ is central in $\pi_1(N_2)$.
For a suitable $z$ this element is an integer $k$ due to the fact that the deviation map $d$ is not null homotopic. In fact $k$ is a generator of the image of $d_{\sharp}$. 
Thus the number of distinct Reidemeister classes is at most finite, which is a contradiction, so   $d$ is null homotopic.

Now we will show that once $d$ is null homotopic, then the pair  $(f,g)$ can be deformed to be coincidence free. 
We will show that $g$ can be deformed to a map $g'$ such that the pair $(f,g')$ is coincidence free.
First observe that $\hat g=g|_{C(p \circ f, p \circ  g)}$  can be deformed to a map $g_1$ such that  $C( \hat f,  g_1)=\emptyset$. This follows because $\hat g=d\cdot \hat f$ and $d$ is homotopic to the constant map at a point $\epsilon \in S^1\setminus \{1\}$.   Consider the constant self homotopy of $p\circ g$ which is the constant homotopy. This homotopy when restricted to $N_1\times \{0\}\cup  C(p \circ f, p \circ  g)\times I$
admits a lift which is the map $g$ in   $N_1\times \{0\}$ and the homotopy between $\hat g$ and $g_1$
in $C(p \circ f, p \circ  g)\times I$ as explained above. By the lifting property of a fibration there is a
lifting  $\tilde g$ of  the constant homotopy which extends the partial lifting. The restriction of the lifting to $N_1\times \{1\}$ gives the desire map $g'$.
\end{proof}

From the proof of Theorem \ref{nil-nil}, we summarize in the following theorem that certain conditions are equivalent.

\begin{theorem}\label{equivalent-conditions}
For any two maps $f,g: N_1\to N_2$ between two compact nilmanifolds with $\dim N_1\ge \dim N_2$, the following are equivalent.
\begin{enumerate}
\item $N(f,g)=0$; \\
\item $R(f,g)=\infty$; \\
\item $f$ and $g$ are deformable to be coincidence free.
\end{enumerate}
\end{theorem}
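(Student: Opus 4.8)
The plan is to prove Theorem \ref{equivalent-conditions} by assembling the implications already established in the preceding two theorems and closing the remaining loop. Specifically, the implication $(1)\Rightarrow(2)$ is exactly \cite[Theorem 4.2]{GW2}, which was invoked in the proof of Theorem \ref{nil-nil}: if $N(f,g)=0$ then the Reidemeister number must be infinite (for a finite Reidemeister number, averaging over the finitely many classes forces $N(f,g)=R(f,g)>0$). The implication $(2)\Rightarrow(3)$ is the content of the bulk of the proof of Theorem \ref{nil-nil}: given $R(f,g)=\infty$, one passes to a principal $S^1$-bundle $S^1\hookrightarrow N_2\xrightarrow{p}\bar N_2$, studies the deviation map $d$ on $C(p\circ f,p\circ g)$, observes that if $d$ is not null-homotopic then $R(f,g)$ is forced to be finite (a contradiction), hence $d$ is null-homotopic, and then the explicit deformation $g'=(\epsilon\cdot d_e)\cdot g$ constructed there is coincidence-free. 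So $(2)\Rightarrow(3)$ needs only to be extracted verbatim.

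The only implication not literally written out is $(3)\Rightarrow(1)$, and this is the easy direction: if $f$ and $g$ are deformable to $f'\sim f$, $g'\sim g$ with $C(f',g')=\emptyset$, then $f',g'$ have no coincidence classes at all, so in particular no essential ones, whence $N(f',g')=0$; since the Nielsen coincidence number is a homotopy invariant in both variables, $N(f,g)=N(f',g')=0$. (Alternatively one simply notes that $N(f,g)>0$ implies $C(f',g')\ne\emptyset$ for every pair homotopic to $(f,g)$, which is the very first sentence of the proof of Theorem \ref{nil-nil}.) Combining $(1)\Rightarrow(2)\Rightarrow(3)\Rightarrow(1)$ gives the cycle of equivalences.

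Thus my proof would simply be: "The implication $(1)\Rightarrow(2)$ is \cite[Theorem 4.2]{GW2}. The implication $(2)\Rightarrow(3)$ was established in the course of the proof of Theorem \ref{nil-nil}. For $(3)\Rightarrow(1)$, homotopy invariance of $N(f,g)$ together with the fact that a pair with empty coincidence set has no essential classes gives $N(f,g)=0$." There is essentially no new mathematical content; the theorem is a repackaging.

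Since the real mathematical work all lives in Theorem \ref{nil-nil} and in \cite{GW2}, there is no genuine obstacle here. If anything, the one point deserving a sentence of care is the homotopy invariance of the \emph{geometric} Nielsen coincidence number $N(f,g)$ of Brooks in the positive-codimension setting — one should make sure that the definition via essential classes (which is phrased precisely so as to be invariant under homotopies $\{f_t\},\{g_t\}$) indeed yields $N(f,g)=N(f_1,g_1)$, which it does directly from the "$\{f_t\},\{g_t\}$-related" formalism recalled in Section 2. Beyond that, the proof is just a citation-and-bookkeeping argument.
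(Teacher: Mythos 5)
Your proposal is correct and matches the paper's intent exactly: the paper offers no separate argument, stating only that the theorem ``summarizes'' the proof of Theorem \ref{nil-nil}, and your cycle $(1)\Rightarrow(2)\Rightarrow(3)\Rightarrow(1)$ — with $(1)\Rightarrow(2)$ from \cite[Theorem 4.2]{GW2}, $(2)\Rightarrow(3)$ extracted from the body of that proof, and $(3)\Rightarrow(1)$ from the definition of essential classes — is precisely the bookkeeping that reading entails. The one point you flag, homotopy invariance of Brooks's geometric Nielsen number, is indeed built into the ``$\{f_t\},\{g_t\}$-related'' definition of essentiality recalled in Section 2, so nothing further is needed.
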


In the case where $\dim N_1 < \dim N_2$, it is well known that for any pair of maps 
$(f,g)$,  $f$ and $g$ are deformable to be coincidence free. This follows easily using obstruction since 
the homotopy groups $\pi_i(N_2\times N_2, N_2\times N_2\backslash \Delta)=0$  for $i\leq \dim (N_1)$. 
Also for some integer $i>0$, we must have that $\Gamma_i(\pi_1(N_1))/\Gamma_{i+1}(\pi_1(N_1))$ has rank 
less than $\Gamma_i(\pi_1(N_2))/\Gamma_{i+1}(\pi_1(N_2))$  since $\dim N_1 < \dim N_2$. Here, $\Gamma_i(G)$ denotes the $i$-th term in the lower central series of a group $G$. This implies 
that the  number of Reidemeister classes  of the induced homomorphisms on the quotient is infinite so $R(f,g)=\infty$ as well. Thus we conclude that for $\dim N_1 < \dim N_2$ with $N_1, N_2$ nilmanifolds, not only are the three conditions in Theorem \ref{equivalent-conditions} equivelant but also that they always hold.

\begin{corollary}\label{obstruction}
Let $f,g:M\to N$ be two maps between two compact nilmanifolds with $\dim M\ge n=\dim N$. Then $f$ and $g$ are deformable to be coincidence free iff the (primary) obstruction $o_n(f,g)=0$.
\end{corollary}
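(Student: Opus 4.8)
The plan is to deduce this directly from Theorem~\ref{equivalent-conditions} together with the implication of \cite{GW2} recalled in the introduction, that $N(f,g)>0$ forces $o_n(f,g)\neq 0$. Concretely, one shows separately that coincidence-free deformability implies $o_n(f,g)=0$ (pure obstruction theory, with no use of the nilmanifold hypothesis) and that $o_n(f,g)=0$ implies coincidence-free deformability (here Theorem~\ref{nil-nil} and its corollary Theorem~\ref{equivalent-conditions} do the real work).

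For the first implication, recall that $o_n(f,g)$ is genuinely the \emph{primary} obstruction: the diagonal $\Delta\subset N\times N$ has codimension $n=\dim N$, so by general position the pair $(f,g)$ can be pushed off $\Delta$ over the $(n-1)$-skeleton of $M$ with no obstruction, and the first obstruction to a coincidence-free deformation therefore lands in $H^{n}(M;\pi_{n-1}(S^{n-1}))$ with the appropriate local coefficients. Being a primary obstruction, $o_n(f,g)$ is a homotopy invariant of the pair; hence if $f\sim f'$ and $g\sim g'$ with $C(f',g')=\emptyset$ then $o_n(f,g)=o_n(f',g')$, and the latter vanishes because $f',g'$ are already coincidence free on every skeleton, so the defining cochain may be taken to be $0$.

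For the converse, suppose $o_n(f,g)=0$. The contrapositive of ``$N(f,g)>0\Rightarrow o_n(f,g)\neq 0$'' gives $N(f,g)=0$, whence by Theorem~\ref{equivalent-conditions} (the implication $(1)\Rightarrow(3)$, equivalently by first passing to $R(f,g)=\infty$) the maps $f$ and $g$ are deformable to be coincidence free.

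All of the geometry has already been spent in Theorem~\ref{nil-nil} and in \cite{GW2}, so the main obstacle here is not a new geometric idea but the mutual compatibility of the three invariants $N(f,g)$, $R(f,g)$ and $o_n(f,g)$: in particular, when $\dim M>n$ there are a priori higher obstructions to a coincidence-free deformation, and one must know that these cannot survive once $o_n(f,g)=0$. That is precisely what the implication $(1)\Rightarrow(3)$ of Theorem~\ref{equivalent-conditions} secures, so the corollary reduces to bookkeeping.
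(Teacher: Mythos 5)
Your proposal is correct and follows essentially the same route as the paper, whose proof is a one-line citation of Theorem~\ref{nil-nil} (equivalently Theorem~\ref{equivalent-conditions}) together with \cite[Theorem 4.2]{GW2}: the contrapositive of ``$N(f,g)>0\Rightarrow o_n(f,g)\neq 0$'' for one direction, and the standard homotopy invariance and vanishing of the primary obstruction for coincidence-free pairs for the other. You have merely made explicit the obstruction-theoretic bookkeeping that the paper leaves implicit.
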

\begin{proof} The result follows from Theorem \ref{nil-nil} and \cite[Theorem 4.2]{GW2}.
\end{proof}

We now further extend Theorem \ref{nil-nil} where the domain is arbitrary.

\begin{theorem}\label{main}
Let $X$ be a connected topological space with finitely generated $\pi_1(X)$ and $N$ a compact nilmanifold. For any maps $f,g:X\to N$, if $R(f,g)=\infty$ then $f$ and $g$ are deformable to be coincidence free.
\end{theorem}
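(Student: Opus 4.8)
The plan is to deduce the theorem from the nilmanifold-to-nilmanifold case (Theorem~\ref{equivalent-conditions}) by replacing the domain $X$ with a compact nilmanifold that carries exactly the same coincidence Reidemeister data. Write $\Ga=\pi_1(N)$, a finitely generated torsion-free nilpotent group with $N=K(\Ga,1)$, and let $f_{\sharp},g_{\sharp}:\pi_1(X)\to\Ga$ be the induced homomorphisms. Since $\pi_1(X)$ is finitely generated, the image $\Ga_0:=(f_{\sharp},g_{\sharp})(\pi_1(X))\leq\Ga\times\Ga$ is a finitely generated subgroup of a torsion-free nilpotent group, hence itself finitely generated torsion-free nilpotent; by Mal'cev's theorem it is the fundamental group of a compact nilmanifold $M_0$. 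Put $n=\dim N$ and $M:=M_0\times T^{n}$, a compact nilmanifold with $\dim M\geq n=\dim N$ and $\pi_1(M)=\Ga_0\times\bbZ^{n}$.

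Next I would build the relevant maps. Because $M_0$ and $N$ are aspherical, the two coordinate projections $\Ga_0\to\Ga$ are realized by maps $M_0\to N$; composing with the projection $M\to M_0$ gives $F,G:M\to N$ whose induced homomorphisms $F_{\sharp},G_{\sharp}:\Ga_0\times\bbZ^{n}\to\Ga$ are the two coordinate projections on the $\Ga_0$-factor and are trivial on the $\bbZ^{n}$-factor. Likewise, let $h:X\to M_0\times T^{n}=M$ have first component $h_0:X\to M_0$ realizing $(f_{\sharp},g_{\sharp}):\pi_1(X)\to\Ga_0$ and second component a constant map; then $F_{\sharp}\circ h_{\sharp}=f_{\sharp}$ and $G_{\sharp}\circ h_{\sharp}=g_{\sharp}$, so by asphericity of $N$ we get $F\circ h\simeq f$ and $G\circ h\simeq g$. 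The crucial observation is that, since the $\bbZ^{n}$-factor is killed by $F_{\sharp}$ and $G_{\sharp}$ and since $(f_{\sharp},g_{\sharp})$ maps $\pi_1(X)$ onto $\Ga_0$, the twisting maps $u\mapsto G_{\sharp}(\ga)\,u\,F_{\sharp}(\ga)^{-1}$ of $\Ga$ over $\ga\in\pi_1(M)$ form literally the same collection as the maps $u\mapsto g_{\sharp}(w)\,u\,f_{\sharp}(w)^{-1}$ over $w\in\pi_1(X)$. Hence the Reidemeister classes of $(F,G)$ and of $(f,g)$ coincide as subsets of $\Ga$, and in particular $R(F,G)=R(f,g)=\infty$.

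Now Theorem~\ref{equivalent-conditions} applies to $F,G:M\to N$ because $\dim M\geq\dim N$: from $R(F,G)=\infty$ one obtains homotopies $F\simeq F'$, $G\simeq G'$ with $C(F',G')=\emptyset$. Composing these homotopies with $h$ yields $f\simeq F\circ h\simeq F'\circ h$ and $g\simeq G\circ h\simeq G'\circ h$, while $C(F'\circ h,G'\circ h)=h^{-1}\bigl(C(F',G')\bigr)=\emptyset$. Thus $f$ and $g$ are deformable to be coincidence free, as desired.

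Two points require attention. The realization of group homomorphisms by maps out of $X$ into the aspherical spaces $M_0,M,N$ is the place where one uses that $X$ (or at least its homotopy type) is a CW complex, matching the finite-complex hypothesis stated in the abstract. The only genuine subtlety, and the step I expect to need the most care, is the dimension bound $\dim M\geq\dim N$: it can fail for $M=M_0$ alone — for instance if $f$ or $g$ is $\pi_1$-trivial, $\Ga_0$ may have Hirsch length strictly below $\dim N$ — and the torus stabilization $M=M_0\times T^{n}$ repairs this without changing the Reidemeister count, precisely because $F_{\sharp}$ and $G_{\sharp}$ annihilate the added $\bbZ^{n}$.
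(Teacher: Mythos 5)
Your proof is correct, and it follows the same overall strategy as the paper's: factor $f$ and $g$ up to homotopy through a compact nilmanifold that carries exactly the same Reidemeister data, and then invoke Theorem~\ref{equivalent-conditions}. The difference lies in how that intermediate nilmanifold is produced. The paper quotes \cite[Theorem 3]{GW3} to obtain a factorization $f\simeq \bar f\circ q$, $g\simeq \bar g\circ q$ through a compact nilmanifold $\bar N$ with $q_{\#}$ surjective and $R(f,g)=R(\bar f,\bar g)$, whereas you build the factorization by hand: Mal'cev realization of the finitely generated torsion-free nilpotent group $\mathrm{Im}(f_{\sharp},g_{\sharp})\leq \pi_1(N)\times\pi_1(N)$ gives $M_0$, the two coordinate projections give the maps to $N$, and a direct comparison of the twisting actions shows the Reidemeister classes are literally the same. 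Your version is thus self-contained modulo Mal'cev's theorem. More importantly, your torus stabilization $M=M_0\times T^{n}$ addresses a point the paper's proof passes over in silence: Theorem~\ref{equivalent-conditions} is stated (and proved) only under the hypothesis that the domain has dimension at least $\dim N$, and there is no reason for $\bar N$ (or $M_0$) to satisfy this --- for instance, if $f_{\sharp}$ and $g_{\sharp}$ are both trivial, $M_0$ is a point. Crossing with $T^{n}$ and killing the new $\mathbb{Z}^{n}$ factor under $F_{\sharp}$ and $G_{\sharp}$ restores the dimension hypothesis without altering a single Reidemeister class, so your argument actually repairs a small gap in the published one. The one hypothesis you correctly flag --- that $X$ should have the homotopy type of a CW complex in order to realize homomorphisms into aspherical targets --- is likewise implicit in the paper's appeal to \cite{GW3}.
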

\begin{proof}
It follows from the proof of \cite[Theorem 3]{GW3} that there is a homotopy commutative diagram
\begin{equation}\label{factor}
\begin{CD}
    X    @.                @>{f,g}>>    @.  N  \\
    @. {\searrow}^q         @.          ^{\bar f, \bar g}{\nearrow} @.     
\end{CD}
\end{equation}
\vskip -6pt
$$\bar N$$
where $\bar N$ is a compact nilmanifold, $R(f,g)=R(\bar f, \bar g)$ and $q_{\#}:\pi_1(X) \to
\pi_1(\bar N)$ is surjective. Since $R(f,g)=\infty$, it follows that $R(\bar f, \bar g)=\infty$. 
If $\dim(\bar N) <\dim(N)$ then certainly $\bar f$, $\bar g$ are deformable  to be coincidence free.
Otherwise, by Theorem  \ref{equivalent-conditions}, $N(\bar f, \bar g) = 0$ and $\bar f, \bar g $ are deformable to be coincidence free.
Note that the coincidences of $f,g$ project to coincidences of $\bar f, \bar g$ under $q$. We conclude that $f,g$ are also deformable to be coincidence free.
\end{proof}

\begin{remark}
The converse of the above 
theorem does not hold in that $f$ and $g$ are deformable to be coincidence free while $R(f,g)$ is finite.
For example, let $S_2$ be the orientable surface of genus 2, $S^n$ the $n$-sphere with $n>2$
and $T^4$ the 4-dimensional torus.  Consider the map $\theta: S_2\to T^4$ such that $\theta_{\#}: \pi_1(S_2)\to \pi_1(T^4)$ is the abelianization (thus a surjective induced homomorphism) and  $ c: S_2\to T^4$ the constant map. 
Let $p: S_2\times S^n \to S_2$ denote  the projection on the first coordinate, $f=\theta \circ p$ and 
$g=c \circ p$ .    Since $f$ and $g$ factor through  $S_2$, which has dimension 2, it follows that $\theta$ and $c$ are deformable to be coincidence free and so are $f$ and $g$. It is straightforward to show that $R(f,g)=1$.
\end{remark}

\begin{corollary}\label{infra}
Let $f,g: M\to N$ be maps where $M$ is a compact infra-nilmanifold and $N$ a compact nilmanifold. Then $N(f,g)=0$ iff $R(f,g)=\infty$. Moreover, if $N(f,g)>0$ then $N(f,g)=R(f,g)$.
\end{corollary}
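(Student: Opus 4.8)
The plan is to reduce the infra-nilmanifold case to the nilmanifold case already handled in Theorem \ref{equivalent-conditions} (equivalently Theorem \ref{nil-nil}) by passing to a finite regular covering. Recall that a compact infra-nilmanifold $M$ admits, by definition, a finite regular covering $\tilde M \to M$ with $\tilde M$ a compact nilmanifold; let $p\colon \tilde M \to M$ be this covering and let $G$ be the (finite) deck transformation group, so $\pi_1(\tilde M) \le \pi_1(M)$ is a normal subgroup of finite index. Set $\tilde f = f \circ p$ and $\tilde g = g \circ p$. The first step is to record the standard relationship between the coincidence invariants of $(f,g)$ and those of $(\tilde f, \tilde g)$: a coincidence of $f$ and $g$ lifts to $|G|$ coincidences of $\tilde f$ and $\tilde g$, Nielsen classes of $(f,g)$ pull back to unions of Nielsen classes of $(\tilde f,\tilde g)$, and essentiality is preserved under the covering, so that $N(f,g)=0$ if and only if $N(\tilde f,\tilde g)=0$; moreover $N(\tilde f,\tilde g) \le |G|\cdot N(f,g)$ and, when these numbers are positive, the averaging/addition formula for coincidence Nielsen numbers over finite coverings applies. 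All of this is classical covering-space bookkeeping for Nielsen coincidence theory and I would simply cite it.

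Next I would apply the nilmanifold results to $\tilde f, \tilde g \colon \tilde M \to N$, which is legitimate since $\dim \tilde M = \dim M \ge \dim N$ and both $\tilde M$ and $N$ are compact nilmanifolds. By Theorem \ref{equivalent-conditions}, $N(\tilde f, \tilde g)=0$ is equivalent to $R(\tilde f, \tilde g)=\infty$ and to $\tilde f, \tilde g$ being deformable to be coincidence free. Since $N(f,g)=0 \iff N(\tilde f,\tilde g)=0$, the equivalence $N(f,g)=0 \iff R(f,g)=\infty$ will follow once I know that $R(f,g)$ is infinite exactly when $R(\tilde f,\tilde g)$ is infinite. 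For this I would use the fact that the Reidemeister coincidence set $\mathcal R(\tilde f,\tilde g)$ surjects onto $\mathcal R(f,g)$ (the covering corresponds to a surjection $\pi_1(\tilde M)\to\pi_1(M)$? — more precisely an inclusion $\pi_1(\tilde M)\hookrightarrow\pi_1(M)$ of finite index, under which Reidemeister classes of the sub-index problem map onto those upstairs with finite fibers of size at most $[\pi_1(M):\pi_1(\tilde M)]$), so $R(f,g)=\infty \iff R(\tilde f,\tilde g)=\infty$. Assembling these equivalences yields the first assertion.

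For the ``moreover'' statement, assume $N(f,g)>0$. Then $N(\tilde f,\tilde g)>0$, so by Theorem \ref{equivalent-conditions} (via \cite{GW2}) we have $N(\tilde f,\tilde g)=R(\tilde f,\tilde g)$; in particular $R(\tilde f,\tilde g)<\infty$, hence $R(f,g)<\infty$ by the previous paragraph. Each essential Nielsen class of $(f,g)$ corresponds to a $G$-orbit of essential Nielsen classes of $(\tilde f,\tilde g)$, and by the index/averaging argument this orbit has exactly $|G|/|G_c|$ elements where $G_c$ is the isotropy — but because $N(\tilde f,\tilde g)=R(\tilde f,\tilde g)$ every Reidemeister class is essential and nonempty, so the combinatorics of Reidemeister classes downstairs matches that of Nielsen classes downstairs exactly, giving $N(f,g)=R(f,g)$. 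The step I expect to be the main obstacle is precisely this last bookkeeping: making sure that ``every Reidemeister class upstairs is essential'' descends to ``every Reidemeister class downstairs is essential,'' i.e.\ that the $G$-action on $\mathcal R(\tilde f,\tilde g)$ and the projection $\mathcal R(\tilde f,\tilde g)\twoheadrightarrow\mathcal R(f,g)$ are compatible in the way the counting requires; this is where one must be careful about whether a coincidence class downstairs can fail to be essential even though its preimages upstairs are all essential. I would resolve it by invoking the standard fact that under a finite regular covering an essential class upstairs always sits over an essential class downstairs and vice versa, so the correspondence on essential classes is exactly the $G$-orbit correspondence, and then the equality $N(f,g)=R(f,g)$ is forced.
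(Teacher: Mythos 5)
Your overall strategy is the same as the paper's: pass to the finite regular covering $p\colon \tilde M\to M$ by a compact nilmanifold, set $\tilde f=f\circ p$, $\tilde g=g\circ p$, compare the invariants up and down, and apply Theorem \ref{equivalent-conditions} upstairs. Your Reidemeister comparison is also the paper's: the inclusion $\pi_1(\tilde M)\hookrightarrow\pi_1(M)$ of finite index makes each Reidemeister class of $(f,g)$ a union of at most $[\pi_1(M):\pi_1(\tilde M)]$ classes of $(\tilde f,\tilde g)$, so $R(f,g)=\infty$ iff $R(\tilde f,\tilde g)=\infty$ (this is \cite[Theorem 2.1]{GW4} in the paper).

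There is, however, one genuine gap: you found the equivalence $N(f,g)=0\Leftrightarrow R(f,g)=\infty$ on the claim that $N(f,g)=0\Leftrightarrow N(\tilde f,\tilde g)=0$ is ``classical covering-space bookkeeping.'' Only one direction of that is bookkeeping. If a class of $(f,g)$ is inessential, the witnessing homotopies $\{f_t\},\{g_t\}$ compose with $p$ to give homotopies of $\tilde f,\tilde g$, so the preimage classes are inessential; this gives $N(f,g)=0\Rightarrow N(\tilde f,\tilde g)=0$ and is the direction the paper proves. The converse fails as a formal covering argument: with the geometric (Brooks) Nielsen number, a class of $(\tilde f,\tilde g)$ may be killed by homotopies of $\tilde f,\tilde g$ that are not of the form $f_t\circ p, g_t\circ p$ and hence do not descend to $M$, so essentiality downstairs is \emph{not} obviously implied by essentiality upstairs — in positive codimension there is no index to transfer. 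The paper closes this direction differently: from $R(f,g)=\infty$ it invokes the deformability result for an arbitrary domain with finitely generated fundamental group (Theorem \ref{main}, which applies to the infra-nilmanifold $M$ itself) to deform $f,g$ to be coincidence free, whence $N(f,g)=0$. You should replace your appeal to two-way preservation of essentiality by this argument. For the ``moreover'' part your orbit/isotropy counting is heavier than necessary and again leans on the hard direction; the paper only needs the easy one: if a coincidence class of $(f,g)$ were inessential, its preimage classes upstairs would be inessential, contradicting $R(\tilde f,\tilde g)<\infty$ (which forces every class of $(\tilde f,\tilde g)$ to be essential), so every class of $(f,g)$ is essential and $N(f,g)=R(f,g)$.
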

\begin{proof} Let $p: \tilde M \to M$ be a finite covering where $\tilde M$ is a compact nilmanifold. Furthermore, we may assume without loss of generality that this covering is regular. Define $\tilde f, \tilde g:\tilde M \to N$ by $\tilde f=f\circ p, \tilde g=g\circ p$. Then $N(f,g)=0 \Rightarrow N(\tilde f,\tilde g)=0$. To see this, first note that $p(C(\tilde f, \tilde g))\subseteq C(f,g)$. Let $\tilde x \in C(\tilde f, \tilde g)$. Now let $\gamma$ be the coincidence class of $f$ and $g$ containing $p(\tilde x)$. Since $N(f,g)=0$ there exist $\{f_t\}, \{g_t\}$ such that $p(\tilde x)$ is not related to any coincidences of $f_1$ and $g_1$. It is easy to see that $\tilde x$ is not related to any coincidences of $f_1\circ p$ and $g_1\circ p$. It follows that $R(\tilde f, \tilde g)=\infty$ since $\tilde M$ is a compact nilmanifold. Since $\pi_1(\tilde M)$ is a finite index (normal) subgroup of $\pi_1(M)$, we have the following commutative diagram.
\begin{equation*}\label{exact}
\begin{CD}
    1 @>>> \pi_1(\tilde M)    @>{p_{\sharp}}>>  \pi_1(M) @>>>    F @>>> 1 \\
    @.     @V{\tilde f_{\sharp}}V{\tilde g_{\sharp}}V  @V{f_{\sharp}}V{g_{\sharp}}V   @VVV @.\\
    1 @>>> \pi_1(N)    @>{=}>>  \pi_1(N) @>>>    1 @>>> 1
 \end{CD}
\end{equation*}

Here, $F$ denotes the finite quotient $\pi_1(M)/\pi_1(\tilde M)$. It follows from \cite[Theorem 2.1]{GW4} that $R(f,g)=\infty=R(\tilde f, \tilde g)$. Then the equivalence $N(f,g)=0 \Leftrightarrow R(f,g)=\infty$ follows from Theorem \ref{nil-nil}. Now, suppose $N(f,g)>0$. Then by Theorem  \ref{main}, $R(f,g) < \infty$.
It follows from the commutative diagram above (also \cite[Theorem 2.1]{GW4}) that $R(\tilde f, \tilde g)<\infty$ and hence every coincidence class of $\tilde f$ and $\tilde g$ must be essential. If $x\in C(f,g)$ then for any $\tilde x \in p^{-1}(x)$, $\tilde x\in C(\tilde f, \tilde g)$. If $x$ were to belong to an inessential class of $f,g$, it is easy to see that $\tilde x$ would also have to belong to an inessential coincidence class of $\tilde f, \tilde g$, a contradiction since there are no such classes. We thus conclude that $N(f,g)=R(f,g)$ when $N(f,g)>0$. 
\end{proof}

\section{Jiang type property}

In classical fixed point theory, selfmaps on Jiang spaces possess the following property: if $L(f)=0$ then $N(f)=0$; if $L(f)\ne 0$ then $N(f)=R(f)$. For coincidences, if $M$ and $N$ are closed orientable manifolds of the same dimension and $N$ is either a nilmanifold \cite{GW3} or a coset space of compact connected Lie group \cite{VW} then either $L(f,g)=0 \Rightarrow N(f,g)=0$ or $L(f,g)\ne 0 \Rightarrow N(f,g)=R(f,g)$. If $\dim M>\dim N$ and if both $M$ and $N$ are compact nilmanifolds then we have $N(f,g)=0 \Leftrightarrow R(f,g)=\infty$ and $N(f,g)=R(f,g)$ if $N(f,g)>0$ \cite{GW2}.

Let $M,N$ be two connected compact manifolds. We say that the pair $(M,N)$ has {\it the Jiang type property for coincidences} if for any maps $f,g:M\to N$, $N(f,g)=0 \Leftrightarrow R(f,g)=\infty$ and $N(f,g)=R(f,g)$ if $N(f,g)>0$.

By Theorem \ref{equivalent-conditions}, we see that any pair $(M,N)$ of compact nilmanifolds has the Jiang type property for coincidences (even if $\dim M< \dim N$).

The proof of Corollary \ref{infra} in fact extends to the following general fact.

\begin{proposition}
Suppose $M$ has a regular finite cover $\hat M$ such that $(\hat M, N)$ has the Jiang type property for coincidences. Then the pair $(M,N)$ also has the Jiang type property for coincidences.
\end{proposition}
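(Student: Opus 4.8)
The plan is to mimic the argument used in the proof of Corollary \ref{infra}, replacing the specific ``nilmanifold'' hypotheses there by the abstract Jiang type property now assumed for $(\hat M, N)$. Let $p:\hat M \to M$ be the given regular finite cover, and for maps $f,g:M\to N$ set $\hat f = f\circ p$, $\hat g = g\circ p$. As in Corollary \ref{infra}, $\pi_1(\hat M)$ sits as a finite index normal subgroup of $\pi_1(M)$, so we have the same commutative diagram with finite quotient $F=\pi_1(M)/\pi_1(\hat M)$, and \cite[Theorem 2.1]{GW4} again gives $R(f,g)=\infty \Leftrightarrow R(\hat f,\hat g)=\infty$, as well as $R(\hat f,\hat g)<\infty$ whenever $R(f,g)<\infty$.

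First I would establish the equivalence $N(f,g)=0 \Leftrightarrow R(f,g)=\infty$. For the forward direction, I would repeat verbatim the covering-space argument of Corollary \ref{infra}: a coincidence $\hat x\in C(\hat f,\hat g)$ projects to $p(\hat x)\in C(f,g)$, lying in some class $\gamma$; since $N(f,g)=0$ one can choose homotopies $\{f_t\},\{g_t\}$ under which $p(\hat x)$ is not related to any coincidence of $f_1,g_1$, and then $\hat x$ is not related to any coincidence of $f_1\circ p, g_1\circ p$; hence $N(\hat f,\hat g)=0$, and since $(\hat M,N)$ has the Jiang type property, $R(\hat f,\hat g)=\infty$, whence $R(f,g)=\infty$ by \cite[Theorem 2.1]{GW4}. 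For the converse, I would run this in reverse: if $R(f,g)=\infty$ then $R(\hat f,\hat g)=\infty$, so $N(\hat f,\hat g)=0$ by the Jiang type property of $(\hat M,N)$, and then $N(f,g)=0$ because any essential class of $f,g$ would pull back (via any point of $p^{-1}$) to a coincidence of $\hat f,\hat g$ that cannot be removed, contradicting $N(\hat f,\hat g)=0$.

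Next I would treat the case $N(f,g)>0$ and show $N(f,g)=R(f,g)$. By the equivalence just proved, $N(f,g)>0$ forces $R(f,g)<\infty$, hence $R(\hat f,\hat g)<\infty$ by \cite[Theorem 2.1]{GW4}, hence $N(\hat f,\hat g)=R(\hat f,\hat g)$ by the Jiang type property, so in particular \emph{every} coincidence class of $\hat f,\hat g$ is essential. Now, exactly as in Corollary \ref{infra}, if $x\in C(f,g)$ lay in an inessential class, then for $\hat x\in p^{-1}(x)$ the class of $\hat x$ would be inessential too, contradicting the previous sentence; therefore every coincidence class of $f,g$ is essential and $N(f,g)=R(f,g)$.

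The main obstacle is the combinatorial bookkeeping already handled in \cite[Theorem 2.1]{GW4}: one needs that the finite index subgroup $\pi_1(\hat M)\le \pi_1(M)$, together with the induced square of homomorphisms to $\pi_1(N)$, controls the relationship between $R(f,g)$ and $R(\hat f,\hat g)$ in both the finite and infinite regimes, and that inessentiality of a coincidence class is stable under passing to a finite regular cover. Both of these are established in the cited results and in the argument of Corollary \ref{infra}; once they are invoked, the only new content is noting that nowhere in that argument was the nilmanifold structure of $\hat M$ used beyond the conclusion ``$(\hat M,N)$ satisfies the Jiang type property,'' which is now an explicit hypothesis. Thus the proof is essentially a word-for-word transcription of Corollary \ref{infra} with that single phrase substituted, and I would present it as such rather than reproving the diagram lemmas.
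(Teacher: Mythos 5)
Your forward direction and your treatment of the case $N(f,g)>0$ track Corollary \ref{infra} faithfully and are fine: the covering argument shows that an \emph{inessential} class of $f,g$ lifts to an inessential class of $\hat f,\hat g$ (compose the killing homotopies with $p$, and project connecting paths back down to get a contradiction), which gives $N(f,g)=0\Rightarrow N(\hat f,\hat g)=0$ and, in contrapositive form, ``every class of $\hat f,\hat g$ essential $\Rightarrow$ every nonempty class of $f,g$ essential.'' The gap is in your converse, $R(f,g)=\infty\Rightarrow N(f,g)=0$. There you need the implication $N(\hat f,\hat g)=0\Rightarrow N(f,g)=0$, i.e., that an \emph{essential} class of $f,g$ lifts to an essential class of $\hat f,\hat g$ --- and that is the direction the covering argument does \emph{not} give. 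A class of $\hat f,\hat g$ is inessential if \emph{some} pair of homotopies of $\hat f,\hat g$ disconnects it from all coincidences of the deformed maps; those homotopies are arbitrary, need not be of the form $f_t\circ p,\ g_t\circ p$, and need not descend to $M$, so their existence says nothing about the class downstairs. Your phrase ``a coincidence of $\hat f,\hat g$ that cannot be removed, contradicting $N(\hat f,\hat g)=0$'' asserts exactly this unproved implication (and also conflates inessentiality of a class with removability of coincidences).

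In Corollary \ref{infra} this direction is not obtained from the cover at all: it comes from the Wecken-type results (Theorem \ref{main}, resting on Theorem \ref{nil-nil}) applied directly to $M$ --- since $\pi_1(M)$ is finitely generated and $N$ is a compact nilmanifold, $R(f,g)=\infty$ lets one deform $f,g$ on $M$ itself to be coincidence free, whence $N(f,g)=0$. That input is precisely what is missing in the abstract setting: $N$ is no longer assumed to be a nilmanifold, and the Jiang type property of $(\hat M,N)$ yields only the vanishing of $N(\hat f,\hat g)$, not removability, and certainly not removability by homotopies that descend to $M$. To close the gap you would need either an extra hypothesis (for instance a Wecken property for $(\hat M,N)$ together with an averaging or descent argument over the deck transformation group, or an index-type argument showing that an essential class downstairs forces one of the finitely many classes covering it to be essential), or some other route to $N(f,g)>0\Rightarrow R(f,g)<\infty$. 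In fairness, the paper itself offers nothing beyond ``the proof of Corollary \ref{infra} extends,'' so your transcription is faithful to its stated intent; but this converse is the one step of that proof that does not transcribe.
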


In the sequel, we investigate the Jiang type property for coincidences. In particular, we explore conditions so that the pair $(M,N)$ possesses such property where $N$ is a compact infra-nilmanifold.



\begin{thebibliography}{99}

\bibitem{B1} R B S Brooks, On removing coincidences of two maps when only one, rather than both, of them may be deformed by a homotopy, {\em Pacific J. Math.} {\bf 40} (1972), 45--52.

\bibitem{B2} R B S Brooks, On the sharpness of the $\Delta_2$ and $\Delta_1$ Nielsen numbers, {\em J. Reine Angew. Math.} {\bf 259} (1973), 101--108.

\bibitem{GJW} D. Gon\c calves, J. Jezierski and P. Wong, Obstruction theory and coincidences in positive codimension, {\em Acta Math. Sin.} (Engl. Ser.) {\bf 22} (2006), no. 5, 1591--1602.

\bibitem{GW1} D. Gon\c calves and P. Wong, Wecken property for roots, {\em Proc. Amer. Math. Soc.} (9) {\bf 133} (2005), 2779--2782.

\bibitem{GW2} D. Gon\c calves and P. Wong, Obstruction theory and coincidences of maps between nilmanifolds, {\em Archiv Math.} {\bf 84} (2005), 568-576. 

\bibitem{GW3} D. Gon\c calves and P. Wong, Nilmanifolds are Jiang-type spaces for coincidences, {\em Forum Math.} {\bf 13} (2001), 133--141.

\bibitem{GW4} D. Gon\c calves and P. Wong, Homogeneous spaces in coincidence theory II, {\em Forum Math.} {\bf 17} (2005), 297--313.

\bibitem{VW} D. Vendruscolo and P. Wong, Jiang-type theorems for coincidences of maps into homogeneous spaces, {\em Topol. Methods Nonlinear Anal.}, {\bf 31} (2008), 151--160.



\end{thebibliography}
\end{document}